\documentclass[11pt]{article}

\usepackage[a4paper,margin=1in]{geometry}
\usepackage{amsmath,amssymb,amsthm,mathtools}
\usepackage{enumitem}
\usepackage[hidelinks]{hyperref}
\usepackage[nameinlink,noabbrev]{cleveref}
\usepackage{microtype}

\newtheorem{definition}{Definition}[section]

\newtheorem{axiom}[definition]{Axiom}
\newtheorem{proposition}[definition]{Proposition}
\newtheorem{theorem}[definition]{Theorem}
\newtheorem{lemma}[definition]{Lemma}
\newtheorem{remark}[definition]{Remark}
\newtheorem{example}[definition]{Example}

\newcommand{\bits}{\{0,1\}}
\newcommand{\strings}{\bits^*}
\newcommand{\Aev}{\mathcal A}
\newcommand{\World}{\mathcal W}

\newcommand{\Obs}{\mathcal O}

\title{\textbf{Well-Defined but Not Predetermined}\\
\large Persistent Mathematical Objects, Embedded Observers, and the Provability of \(P=NP\)}

\author{Rasoul Ramezanian\\ \small University of Fribourg \\ \small rasoul.ramezanian@unifr.ch}
\date{}

\begin{document}
\maketitle

\begin{abstract} 
We develop a foundational distinction between \emph{well-definedness} and
\emph{predetermination}.  A persistent mathematical or computational object may return a
unique answer whenever a query is made and preserve every previously returned answer, while
its answers to as-yet unqueried inputs are not represented by one completed,
history-independent extension.  We call such an object \emph{well-defined but
non-predetermined}.

The framework is developed through persistent, history-indexed partial functions and set
processes.  Every realized history of such a process is extensionally compatible with a
classical predetermined object, although different incompatible histories may require
different classical completions.   

We next formulate an observer-based semantics.  All observers inhabit one shared
computational world, their interactions modify one common global state, and they have access
to one realized history.  Under axioms of shared history, historical irreversibility, persistence, and
historical indistinguishability, embedded observers cannot establish from the realized
history alone whether its compatible extension was predetermined or progressively formed.

A persistently evolutionary polynomial time computable oracle supplies a concrete example and yields a
deterministic--nondeterministic separation under an explicitly history-sensitive
  semantics.  We then derive a conditional result concerning the provability of \(P=NP\). Suppose that an observer cannot distinguish its actual computational world from a \textit{persistently evolutionary world} in which \(P\neq NP\). Suppose further that the observer's entire internal reasoning process—including any purported proof of \(P=NP\)—is reproduced in that evolutionary world. The same proof process would then lead the observer to conclude \(P=NP\) in a world in which \(P=NP\) is false. Consequently, that process cannot constitute a sound proof of \(P=NP\). Under the proposed observer axioms and the existence of such an indistinguishable evolutionary realization, no embedded observer can therefore possess a sound internal proof of \(P=NP\). If human mathematicians are themselves embedded observers in this sense, the same conclusion applies to us.

The conclusion is an observer-relative unprovability theorem, not an unconditional proof of
classical \(P\neq NP\). 
\end{abstract}

\noindent\textbf{Keywords:} non-predetermined functions; persistent computation; embedded
observers; irreversible histories; \(P\) versus \(NP\); provability; simulation,  Brouwer Choice Sequences.

\medskip
%\noindent\textbf{MSC 2020:} 03D10, 03D15, 03B70, 68Q15.

\section{Introduction}

Classical mathematics normally treats a function or set as extensionally complete.  If
\(f:X\to Y\), then every value \(f(x)\) belongs to one already determined completed graph.  If
\(A\subseteq X\), then every membership proposition \(x\in A\) has a fixed truth value,
independently of whether that question is ever asked.

This ordinary conception combines two logically different properties:
\begin{enumerate}[label=(\roman*)]
  \item \emph{well-definedness}: an input has a unique value;
  \item \emph{predetermination}: all values belong to one completed extension fixed
  independently of \textit{interaction} history.
\end{enumerate}

The purpose of this paper is to separate these properties.  A process may determine a unique
answer when an input is first encountered and preserve that answer forever, while the answer
to a previously unencountered input remains sensitive to the order of future interactions.
Such a process is well-defined along every realized history, but no single
history-independent extension need represent all counterfactual histories.

\begin{quote}
 \textit{Well-definedness may arise from the persistence of already determined values rather than from the predetermination of all future values, while an embedded observer is unable to distinguish whether the values are progressively determined and preserved or fixed in advance.
}
\end{quote}

We develop a framework to formulate explicitly the assumptions concerning embedded observers, and to distinguish carefully between the following three claims:
\begin{enumerate}[label=(\arabic*)]
  \item there exist coherent formal models of persistent non-predetermined computation in which deterministic and nondeterministic polynomial-time computation are separated;
  \item embedded observers are unable to distinguish such non-predetermined models from predetermined ones on the basis of their realized history;
  \item under additional assumptions concerning the preservation and soundness of internal proofs, such observers cannot soundly prove
  \(P=NP\).
\end{enumerate}

The third claim is epistemic.  It is not, by itself, the classical mathematical conclusion
\(P\neq NP\).

The framework has affinities with mathematical un-finished objects as Brouwer's choice sequences
\cite{TroelstraVanDalen1988,VanAtten2007}, interactive computation
\cite{Goldin2000,GoldinSmolkaWegner2006,GoldinWegner2008}, Bostrom's simulation arguments
\cite{Bostrom2003}, and   process-based descriptions of evolving natural systems~\cite{WongEtAl2023}.   

 The main theorem formalizes the following argument. Suppose that an observer cannot distinguish its actual world from a persistently evolutionary world in which \(P\neq NP\). Suppose, moreover, that the observer would carry out exactly the same reasoning and obtain exactly the same purported proof of \(P=NP\) in both worlds. Since \(P=NP\) is false in the evolutionary world, this reasoning cannot constitute a sound proof of \(P=NP\). Thus, under the assumptions introduced in this paper, an embedded observer cannot possess a sound proof of \(P=NP\).

 \section{Related Work}
\label{sec:related-work}

\subsection{Brouwer's choice sequences}

The idea that a mathematical object need not be given as a completed totality from the outset
has a well-established precedent in intuitionistic mathematics.  In Brouwer's theory of
choice sequences, an infinite sequence is conceived as being progressively generated rather
than as a completed infinite object whose entries are all fixed in advance; see, for example,
\cite{TroelstraVanDalen1988,VanAtten2007}.  At any finite stage, only an initial segment of
the sequence has been determined, while further entries may be determined as the construction
continues.

This viewpoint is closely related to the distinction between well-definedness and
predetermination considered in the present paper.  A persistent non-predetermined function
similarly develops through time: once a value has been determined, it remains fixed, while
values not yet determined need not belong to a completed extension fixed independently of
future interactions.  Thus, in both settings, mathematical definiteness need not be identified
with the prior existence of a completed infinite object.

There are, however, important differences.  Our framework is not intended as a reformulation
of Brouwerian intuitionism or of the theory of choice sequences.  Here the central object is
an interactive computational process whose future extension may depend on the history and
order of interactions.  Moreover, our main concern is observer-relative: an embedded observer
has access only to one realized history and, under the assumptions introduced in this paper,
may be unable to determine whether the observed values were fixed in advance or progressively
determined and subsequently preserved.  Choice sequences therefore provide an important
foundational precedent for mathematical objects that unfold over time, while the present
framework develops this idea in a persistent, computational, and observer-dependent setting.

\subsection{Simulation and embedded reasoning}

The simulation hypothesis is not assumed in this paper.  Bostrom's simulation argument
concerns probabilistic alternatives involving the development of posthuman civilizations and
the possibility that observers like ourselves inhabit ancestor simulations
\cite{Bostrom2003}.  The present framework requires none of these claims.  It draws only on
the more general methodological possibility that observers may have access to the behavior
generated within a world without having access to the external mechanism by which that world
is realized.

This distinction is particularly relevant when the observer is itself part of the system
under investigation.  In the present framework, observing, reasoning, recording information,
and constructing or verifying a mathematical proof are themselves events occurring within the
modeled world.  The observer therefore does not occupy an external standpoint from which
different realizations of the same history can be inspected and compared.

For this reason, three notions must be kept distinct:
\begin{enumerate}[label=(\roman*)]
\item equality of the histories accessible to the observer;
\item preservation of a particular internal reasoning or proof process;
\item equality of mathematical truth across the underlying worlds.
\end{enumerate}
Two worlds may be indistinguishable from the observer's internal perspective without thereby
being identical as external mathematical structures.  This distinction is central to the
observer-relative argument developed below.

\subsection{Scientific motivation from evolving systems}

Wong et al.\ propose a general account of evolving systems based on combinatorial richness,
the generation of configurations, and selection for function \cite{WongEtAl2023}.  Several
aspects of their framework provide useful conceptual motivation for the present work.

First, their distinction between static and dynamic persistence emphasizes that what persists
through time need not be an unchanging configuration; persistence may instead characterize an
ongoing process.  This is conceptually related to our use of persistence, although persistence
has a more specific meaning here: once a value or membership decision has been realized, it
remains fixed throughout every continuation of the realized history.

Second, Wong et al.\ emphasize the temporal asymmetry and context dependence of evolving
processes.  This is compatible with, although weaker than, the history dependence considered
here, where the future extension of a mathematical object may depend on the order in which
interactions occur.

Third, their discussion of novelty generation and open-ended evolution includes the
possibility that evolving systems construct new possibility spaces rather than merely explore
a fixed space of configurations.  This provides useful motivation for distinguishing a
predetermined but incompletely known future from a process in which future states are
progressively formed.

These connections are motivational rather than deductive.  The theory of evolving systems
does not imply persistent non-predetermination in the technical sense used in this paper, nor
does it establish the observer-indistinguishability assumptions introduced below.

\subsection{Relation to standard complexity barriers}

The relationship between the present construction and classical barriers in complexity theory
requires particular care.  Baker, Gill, and Solovay showed that there exist fixed oracles
relative to which \(P=NP\) and other fixed oracles relative to which \(P\neq NP\)
\cite{BakerGillSolovay1975}.  Consequently, any proof technique that relativizes cannot by
itself resolve the classical \(P\) versus \(NP\) problem.  Natural proofs and algebrization
identify additional limitations on broad classes of proof techniques
\cite{RazborovRudich1997,AaronsonWigderson2009}.

The polynomial-time computable oracle considered in the present paper, which separates deterministic from nondeterministic polynomial-time computation under the proposed semantics, differs fundamentally from a classical fixed oracle.  Its answers are persistent once determined, but its extension is formed through
interaction and may depend on the history and order of queries.  The resulting complexity
classes are therefore defined under an explicitly history-sensitive computational semantics.
The evolutionary separation established in Theorem~\ref{thm:evsep} should consequently not
be interpreted as an ordinary relativized separation for a fixed oracle.

Accordingly, the present framework does not claim that the evolutionary separation alone
constitutes a proof of the classical statement \(P\neq NP\).  Rather, the separation provides
the computational component of the subsequent observer-relative argument.

\section{Well-definedness without predetermination}

\subsection{Histories and partial functions}

Let \(X\) be a countable set of possible inputs and \(Y\) a set of possible outputs.

\begin{definition}[History]
A finite history is a sequence
\[
  H=((x_1,y_1),\ldots,(x_t,y_t))\in(X\times Y)^{<\omega}.
\]
It is \emph{consistent} if \(x_i=x_j\) implies \(y_i=y_j\).  If \(H'\) extends \(H\),
we write \(H\preceq H'\).
\end{definition}

Every consistent history determines a partial function \(f_H\).

\begin{definition}[Persistence]
A history-generating process is \emph{persistent} if
\[
  H\preceq H'\quad\Longrightarrow\quad f_H\subseteq f_{H'}.
\]
\end{definition}

Thus every value once realized remains fixed.

\begin{definition}[Persistent non-predetermination]
A persistent process is \emph{non-predetermined} if there exist admissible histories
\(H,H'\) and an input \(x\) such that
\[
  x\in\operatorname{dom}(f_H)\cap\operatorname{dom}(f_{H'})
  \quad\text{and}\quad
  f_H(x)\neq f_{H'}(x).
\]
\end{definition}

The disagreement occurs across incompatible histories, never inside one realized history.

\subsection{A canonical example}

\begin{example}
Let \(W\) initially be empty.  When \(n\in\mathbb N\) is presented, define
\[
g(n)=
\begin{cases}
z, & (n,z)\in W,\\
|W|+1, & \text{otherwise},
\end{cases}
\]
and in the second case update
\[
W\leftarrow W\cup\{(n,|W|+1)\}.
\]
If the arguments arrive in the order
\[
7,9,1,11,
\]
then \(g\) grows as follows:
\[
g(7)=1,\quad g(9)=2,\quad g(1)=3,\quad g(11)=4.
\]
If they arrive in the order
\[
9,1,7,11,
\]
then \(g\) forms as follows:
\[
g(9)=1,\quad g(1)=2,\quad g(7)=3,\quad g(11)=4.
\]
Repeated presentation of an argument always returns the value previously assigned to it.
\end{example}

The process is not random in the relevant sense: given the current state and next input, the
next output is fixed.  What is absent is a single extension independent of the future input
history. The function \(g\) is a mathematical object whose values are progressively determined over time, in a manner analogous to Brouwer's choice sequences, rather than being specified by a completed extension from the outset.

\subsection{Persistent set processes}

Let \(\Sigma=\bits\).  At history \(H\), define
\[
A_H^+=\{x\in\strings:\chi_H(x)=1\},\qquad
A_H^-=\{x\in\strings:\chi_H(x)=0\},
\]
and
\[
U_H=\strings\setminus(A_H^+\cup A_H^-).
\]

\begin{definition}[Persistent set process]
A persistent set process is a family of partial characteristic functions
\[
  \chi_H:\operatorname{dom}(\chi_H)\to\bits
\]
indexed by admissible histories, such that
\[
  H\preceq H'\quad\Longrightarrow\quad \chi_H\subseteq\chi_{H'}.
\]
\end{definition}

\begin{proposition}[Finite trace completion]\label{prop:completion}
For every finite consistent history \(H\), there exists a classical set
\(B_H\subseteq\strings\) whose membership answers agree with all observations in \(H\).
\end{proposition}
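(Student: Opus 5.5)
The plan is to build \(B_H\) directly from the finite partial characteristic function that \(H\) determines, and to fill every unqueried string in one fixed default way. Write \(H=((x_1,y_1),\ldots,(x_t,y_t))\) with \(x_i\in\strings\) and \(y_i\in\bits\). Because \(H\) is consistent, \(x_i=x_j\) implies \(y_i=y_j\). Hence the finite relation \(\chi_H=\{(x_i,y_i):1\le i\le t\}\) is a function, with domain \(\{x_1,\ldots,x_t\}\). This function-hood check is the only point where consistency is used, and it is also the only place the argument could fail: without consistency no completion exists, since some \(x\) would be recorded as both in and out. So I will state this step explicitly rather than treat it as routine.

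Next I define
\[
  B_H=\{x\in\strings : x\in\operatorname{dom}(\chi_H)\text{ and }\chi_H(x)=1\}=A_H^+ .
\]
In words, every string in \(U_H\) and in \(A_H^-\) is placed outside \(B_H\). Any other total extension of \(\chi_H\) would serve equally well, for example putting all of \(U_H\) inside. I fix the default ``outside'' only to make the choice canonical. \(B_H\) is an ordinary, history-independent subset of \(\strings\): it is finite, hence certainly a classical set, and in fact decidable.

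Agreement then follows by checking each observation. For each \(i\le t\), if \(y_i=1\) then \(x_i\in A_H^+=B_H\). If \(y_i=0\) then \(\chi_H(x_i)=0\), so \(x_i\notin A_H^+\) and therefore \(x_i\notin B_H\). In both cases the membership answer of \(B_H\) at \(x_i\) equals \(y_i\).

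Finally I will remark, without needing it for the proof, that \(B_H\) depends on \(H\). For two incompatible histories \(H,H'\) witnessing non-predetermination, the completions \(B_H\) and \(B_{H'}\) must differ. This matches the abstract's point that every realized history is classically completable, although different histories may require different completions.
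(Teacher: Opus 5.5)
Your proof is correct and is the paper's argument specialized to \(C=\varnothing\): the paper sets \(B_H=A_H^+\cup C\) for an arbitrary \(C\subseteq U_H\), which is exactly the freedom you note when you say any total extension of \(\chi_H\) would do. Your explicit check that consistency makes \(\chi_H\) a function is a step the paper leaves implicit, and it is the right place to locate the use of that hypothesis.
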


\begin{proof}
Choose any \(C\subseteq U_H\) and define
\[
  B_H=A_H^+\cup C.
\]
For every queried string \(x\), membership in \(B_H\) agrees with the answer in \(H\).
\end{proof}

\begin{remark}
The proposition establishes extensional compatibility along one history.  It does not provide
one static set valid across all possible histories.
\end{remark}

\section{Observers in one shared computational world}

Let
\[
  \World=(S,s_0,\tau)
\]
be a computational world.  Observers, computers, records, and proof activities are processes
within \(\World\).  A realized history is
\[
s_0\xrightarrow{e_1}s_1\xrightarrow{e_2}s_2\longrightarrow\cdots .
\]

\begin{axiom}[Shared history]\label{ax:shared}
All observers in one world interact with one common global state.  An interaction by one
observer becomes part of the state encountered by every later observer.
\end{axiom}

\begin{axiom}[Historical irreversibility]\label{ax:irreversible}
An embedded observer cannot return the entire world to a previous total state (s) while retaining information acquired after that state. Restoring the entire world to (s) would necessarily restore the observer, including their memory and all available records, to their state at (s). Consequently, the observer cannot experience two different continuations of exactly the same total past state and retain enough information to compare them.
\end{axiom}

\begin{axiom}[Persistence of realized facts]\label{ax:persist}
Once the world has realized a membership or functional value, every later realized state
preserves it.
\end{axiom}

\begin{definition}[Internal historical equivalence]
Two world-histories are internally historically equivalent for an observer \(O\) if the
observer has the same accessible sequence of observations, actions, memories, written symbols,
proof states, and verification states in both.
\end{definition}

\begin{axiom}[Historical indistinguishability]\label{ax:ind}
If two worlds generate internally indistinguishable histories for an observer (O), then, on the basis of that history alone, (O) cannot determine which of the two worlds it inhabits.
\end{axiom}

%The axiom does not say that the worlds satisfy the same external formulas.  It says that the observer cannot identify the implementation from the shared internal trace.

\subsection{Why many observers do not defeat indistinguishability}

The observer argument does not depend on there being only one observer.  Let
\[
  \Obs=\{O_1,O_2,\ldots\}
\]
be any population of observers.  By Axiom~\ref{ax:shared}, their interventions form one
interleaved global history.  If \(O_1\) changes the state, \(O_2\) encounters the changed
state.

To distinguish predetermination directly from persistent formation, the observers would need
to compare incompatible continuations from the numerically identical total past while
retaining information from both continuations.  This is what Axiom~\ref{ax:irreversible}
excludes.

%Creating independent copies is not equivalent.  Copies are distinct systems, not two simultaneously remembered futures of one numerically identical total state.

\section{A persistently evolutionary polynomial time  oracle}\label{sec:oracleA}
In this section, we introduce a  persistently evolutionary polynomial time computable oracle denoted by \(\Aev\).

Let
\[
M=(Q,\bits,\delta,q_0,F)
\]
be a partial deterministic finite automaton.  Initially
\[
Q=\{q_0\},\qquad \delta=\varnothing,\qquad F=\varnothing.
\]
where \(Q\) shows states of the automaton, \( \delta\)  denotes the partial transition function, and  \(F\) refers to final states.

On a query \(x=a_1\cdots a_n\), follow the defined transitions.

 \begin{enumerate}[label=\textbf{Rule \arabic*.}]
    \item If, following transitions in \( \delta\), all of \(x\) is read and the final state lies in \(F\), accept without change.
    \item If, following transitions in \( \delta\), all of \(x\) is read and the final state \(q\notin F\), reject if \(q\) has a
    one-symbol transition to an accepting state; otherwise add \(q\) to \(F\) and accept.
    \item If, following transitions in \( \delta\),  the automaton crashes on an undefined transition, add a new path for the unread
    suffix (adding new states and transitions) to complete reading \(x\), and make the final new state accepting.
\end{enumerate}

Call the resulting process \(\Aev\).

\begin{proposition}[Persistence]\label{prop:persistent-oracle}
Every membership answer returned by \(\Aev\) is permanent.
\end{proposition}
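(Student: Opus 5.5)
The plan is to prove two monotonicity invariants of the automaton \(M\) and then handle accepted and rejected strings separately. Throughout, "later" refers to any extension \(H\preceq H'\) of the query history, so the statement amounts to \(\chi_H\subseteq\chi_{H'}\) in the sense of a persistent set process.

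\emph{Step 1 (monotone growth).} By inspecting the three rules, I will check that no rule ever removes a state, a transition, or an element of \(F\).
\begin{itemize}
  \item Rule 1 changes nothing.
  \item Rule 2 only enlarges \(F\).
  \item Rule 3 adds fresh states, together with transitions defined only at the crash point and beyond. The crash point is a pair \((q,a)\) with \(\delta(q,a)\) undefined, and every later transition starts at a fresh state. Rule 3 also adds a fresh state to \(F\).
\end{itemize}
Hence \(Q\), \(\delta\) and \(F\) can only grow, and \(\delta\) stays a partial function, so \(M\) remains deterministic. It follows that once the run of \(M\) on a string \(x\) reads all of \(x\) and ends in a state \(q\), every later run on \(x\) follows the same transitions and ends in the same \(q\). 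In particular, Rule 3 never applies to \(x\) again.

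\emph{Step 2 (accepted strings).} Suppose \(x\) is accepted. I will check that in each case the run on \(x\) afterwards ends in a state of \(F\):
\begin{itemize}
  \item under Rule 1 the final state was already in \(F\);
  \item under Rule 2 the final state \(q\) was just added to \(F\);
  \item under Rule 3 the final state is the new state, which was made accepting.
\end{itemize}
By Step 1 both the run and the membership of its final state in \(F\) persist. So every later query of \(x\) falls under Rule 1 and is accepted.

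\emph{Step 3 (rejected strings).} This is the step I expect to be the main obstacle, because rejection is not recorded by any explicit mark in \(M\). A rejection only occurs under Rule 2. There the run on \(x\) ends in some \(q\notin F\), and \(\delta(q,b)=p\in F\) for some symbol \(b\).

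I will prove by induction on later queries that \(q\) never enters \(F\):
\begin{itemize}
  \item By Step 1 the transition \(\delta(q,b)=p\) persists and \(p\) stays in \(F\).
  \item Rule 3 only adds fresh states to \(F\), and \(q\) already exists, so Rule 3 cannot add \(q\).
  \item Rule 2 adds a state to \(F\) only when that state has no one-symbol transition into \(F\). This condition fails for \(q\) forever, so Rule 2 cannot add \(q\) either.
  \item Rule 1 changes nothing.
\end{itemize}

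Consequently every later query of \(x\) reads all of \(x\) (Step 1), ends in \(q\notin F\), and again meets the Rule 2 condition, so \(x\) is rejected again. Combining Steps 2 and 3 gives permanence of every answer returned by \(\Aev\).
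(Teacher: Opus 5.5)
Your proof is correct and takes the same approach as the paper. An accepted string ends at a state of \(F\), which is never removed. A rejected string ends at a nonaccepting state with a one-symbol transition into \(F\), and that configuration persists.

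You also make explicit two points the paper's two-line proof leaves implicit. First, \(Q\), \(\delta\) and \(F\) only grow, so the run on \(x\) is stable. Second, the rejecting state \(q\) can never later enter \(F\): Rule~3 only marks fresh states, and Rule~2's acceptance condition fails for \(q\) permanently.
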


\begin{proof}
An accepted string ends at an accepting state, whose status is never removed.  A rejected
string ends at a nonaccepting state having a one-symbol transition to an accepting state;
neither that transition nor the nonaccepting status is later removed.
\end{proof}

\begin{proposition}[History dependence]\label{prop:history}
The extension generated by \(\Aev\) is not predetermined.
\end{proposition}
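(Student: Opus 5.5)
The plan is to verify the definition of persistent non-predetermination directly. We will exhibit two admissible query histories \(H,H'\) of \(\Aev\) and a string \(x\) queried in both, such that \(\Aev\) accepts \(x\) along one history and rejects it along the other. Persistence along each history is already supplied by Proposition~\ref{prop:persistent-oracle}. So the only remaining task is a disagreement across incompatible histories.

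The natural witness uses Rule 2's rejection clause. Starting from the initial automaton, consider \(H\): first query the string \(0\), then the empty string \(\varepsilon\).
\begin{itemize}
\item The query \(0\) crashes at \(q_0\), so by Rule 3 a new state \(q_1\) with \(\delta(q_0,0)=q_1\) is added, \(q_1\in F\), and \(0\) is accepted.
\item The query \(\varepsilon\) then reads completely and ends at \(q_0\notin F\). Since \(q_0\) has a one-symbol transition to the accepting state \(q_1\), Rule 2 rejects \(\varepsilon\).
\end{itemize}
In \(H'\) we query \(\varepsilon\) first. It ends at \(q_0\notin F\), and \(q_0\) has no transitions at all. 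So Rule 2 adds \(q_0\) to \(F\) and accepts \(\varepsilon\).

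Thus \(f_H(\varepsilon)=0\) and \(f_{H'}(\varepsilon)=1\), and the definition is satisfied with \(x=\varepsilon\). If one prefers to avoid the empty string, the same pattern works one level down. In the first history, query \(00\) and then \(0\): the string \(0\) ends at a non-final intermediate state with a transition to an accepting state, so it is rejected. In the second history, query \(0\) first, and it is accepted via Rule 3.

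There is no real obstacle; the only care needed is checking the rules literally. In particular, I will check that the state on which Rule 2 fires is indeed non-final, and that the one-symbol transition exists in one history but not in the other. I will also remark that the two histories are incompatible, since neither extends the other, in line with the remark that disagreement occurs only across histories. Finally, I will note that this shows no single classical completion, in the sense of Proposition~\ref{prop:completion}, serves both histories.
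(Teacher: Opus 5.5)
Your argument is correct and is essentially the paper's proof. The paper uses the pair \(10\)/\(101\): querying the one-symbol extension \(101\) first makes Rule~2 reject \(10\), while querying \(10\) first makes Rule~3 accept it, and your \(0\)/\(00\) witness is exactly this pattern one level shallower. Your \(\varepsilon\)/\(0\) witness differs only in that the acceptance of \(\varepsilon\) in \(H'\) comes from Rule~2's clause adding \(q_0\) to \(F\) rather than from Rule~3, which is equally valid since \(\varepsilon\in\{0,1\}^*\).
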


\begin{proof}
If \(10\) is queried first, Rule 3 creates an accepting path for \(10\).  If \(101\) is
queried first, the state reached after \(10\) is nonaccepting and has a one-symbol transition
to an accepting state, so a later query \(10\) is rejected.
\end{proof}

\begin{proposition}[Response cost]
Under unit-cost access to transitions and unit cost per state or transition update, every query
\(x\) is processed in \(O(|x|)\) operations.
\end{proposition}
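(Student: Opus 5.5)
The plan is to bound the work of a single query to \(\Aev\) by analysing the three phases it can go through: traversal of existing transitions, a one-step check at the end of the traversal, and possibly construction of a new path. Under the stated cost model (unit cost to look up a transition \(\delta(q,a)\), to test membership of a state in \(F\), and to add a state, a transition, or an element of \(F\)), it suffices to show that each phase uses \(O(|x|)\) operations and that exactly one of Rules 1--3 fires per query. Let \(x=a_1\cdots a_n\), so \(n=|x|\).

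\textbf{Traversal.} Starting at \(q_0\), the process reads \(a_1,a_2,\ldots\) one at a time. Each step does a single lookup of \(\delta(q,a_i)\) and either moves to the successor state or finds the transition undefined. The traversal stops either after all \(n\) symbols are read or at the first index \(k\le n\) where \(\delta(q_{k-1},a_k)\) is undefined. In both cases at most \(n\) lookups occur, so this phase costs \(O(n)\). Because \(M\) is deterministic, the path is unique and no backtracking is needed.

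\textbf{Case analysis on the outcome.}
\begin{itemize}
\item \emph{Rule 1} (full read, final state \(q\in F\)): one membership test, constant cost.
\item \emph{Rule 2} (full read, \(q\notin F\)): check whether \(\delta(q,0)\) or \(\delta(q,1)\) is defined and lies in \(F\). Since the alphabet is \(\bits\), this is at most two lookups and two membership tests. Then either reject, or add \(q\) to \(F\) with one update. Constant cost.
\item \emph{Rule 3} (crash at position \(k\)): create \(n-k+1\) new states and \(n-k+1\) transitions for the unread suffix \(a_k\cdots a_n\), then add the last new state to \(F\). That is at most \(2n+1\) updates, which is \(O(n)\).
\end{itemize}
Adding the traversal cost to the cost of whichever rule applies gives \(O(|x|)\) per query.

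\textbf{Main obstacle.} The difficulty is not the arithmetic but pinning down the cost model so the bound does not hide a dependence on the size of the growing automaton. Fresh state names and transition lookups must stay unit cost as \(|Q|\) grows over the history. I would handle this by reading ``unit-cost access'' literally, as in the statement: for example, states are stored as records with two successor pointers and an accepting flag, and new states are allocated by pointer. This makes lookups and updates genuinely constant time, independent of history.

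I would also remark on the bit-cost model. There, state names have length \(O(\log|Q|)\), and \(|Q|\) is at most \(1\) plus the total length of all previous queries. The per-query cost is then polynomial in \(|x|\) plus a logarithmic overhead in the history length, which still supports the intended polynomial-time interpretation of \(\Aev\).
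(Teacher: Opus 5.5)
Your proof is correct and follows the same argument as the paper: traversal reads at most \(|x|\) transitions, and Rule~3 creates at most \(|x|\) new states and transitions. Your constant-cost treatment of the two-successor check in Rule~2 fills in a step the paper leaves implicit. One small correction concerns your side remark on the bit-cost model, which lies outside the statement: there the \(\log|Q|\) cost enters as a factor on each lookup, giving \(O(|x|\log|Q|)\), not as an additive overhead.
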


\begin{proof}
At most \(|x|\) transitions are traversed. If a path is added, at most \(|x|\) states and
transitions are created.
\end{proof}

The automaton above is the formal representation of \(\mathcal A\) used
throughout the paper.  It is also possible to realize the same process as a
persistently growing program.  In this representation, every finite binary
prefix corresponds to a directory containing a constant-size
\texttt{node.py} file with three binary values recording whether the
corresponding state is accepting and whether its \(0\)- and \(1\)-branches
have already been generated.  Processing an input follows only the
directories corresponding to its successive prefixes, while Rules~2 and~3
modify or extend this code when necessary.  Appendix~\ref{app:growing-code}
describes this implementation and shows that each query is processed in
time polynomial in the length of the input.

\section{History-sensitive complexity}

A deterministic machine interacts sequentially with the current global state of \(\Aev\).
Let \(P_{\mathrm{ev}}^{\Aev}\) denote languages decided in polynomial time equipped with the oracle \(\Aev\).

For nondeterminism, we use certificates.

\begin{definition}
A language \(L\) belongs to \(NP_{\mathrm{ev}}^{\Aev}\) if there are
\(J\in P_{\mathrm{ev}}^{\Aev}\) and a polynomial \(q\) such that
\[
x\in L
\quad\Longleftrightarrow\quad
\exists y\bigl(|y|\le q(|x|)\land (x,y)\in J\bigr).
\]
\end{definition}

Let
\[
L_{\Aev}=\{x:\Aev\text{ accepts }x\}
\]
 and
\[
L_{\exists}
=
\{w:\exists x\,(|x|=|w|\land x\in L_{\Aev})\}.
\]

\begin{proposition}\label{prop:belongtoNP}
\(L_{\exists}\in NP_{\mathrm{ev}}^{\Aev}\).
\end{proposition}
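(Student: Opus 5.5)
The plan is to exhibit an explicit verifier \(J\) together with the polynomial \(q(n)=n\), and then check the defining equivalence for \(NP_{\mathrm{ev}}^{\Aev}\). Define
\[
J=\{(w,y): |y|=|w| \text{ and } \Aev \text{ accepts } y\}.
\]
A machine deciding \(J\) on input \((w,y)\) first compares lengths in time \(O(|w|+|y|)\) and rejects if they differ. Otherwise it issues the single oracle query \(y\) to the current global state of \(\Aev\) and outputs the answer. By the response-cost proposition, that query is processed in \(O(|y|)\) operations, so \(J\in P_{\mathrm{ev}}^{\Aev}\).

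Next, verify the equivalence \(w\in L_\exists \iff \exists y\,(|y|\le |w| \land (w,y)\in J)\), read along the realized history. For \((\Rightarrow)\): if \(w\in L_\exists\), there is some \(x\) with \(|x|=|w|\) and \(x\in L_{\Aev}\). Then \(y:=x\) is a certificate. When the verifier queries \(x\), persistence (Proposition~\ref{prop:persistent-oracle}) guarantees that \(\Aev\) returns the same acceptance it has already given. For \((\Leftarrow)\): if some \(y\) of length \(|w|\) makes \(J\) accept, then \(\Aev\) accepted \(y\) at that moment. By Proposition~\ref{prop:persistent-oracle} this acceptance is permanent, so \(y\in L_{\Aev}\) and hence \(w\in L_\exists\).

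The main obstacle is semantic rather than computational. \(L_{\Aev}\) is not a fixed set; it is formed by the history, and querying a previously unqueried \(y\) may itself create an acceptance via Rule~3 or Rule~2. I would handle this by interpreting \(L_{\Aev}\), and hence \(L_\exists\), as the set realized along the actual shared history that includes the verifier's own queries, in line with Axioms~\ref{ax:shared} and~\ref{ax:persist}. Under this reading, ``\(\Aev\) accepts \(x\)'' means acceptance at the point where \(x\) is queried, and persistence makes that answer stable at every later stage. Consequently the two directions above are statements about a single realized history, and no comparison across incompatible histories (Proposition~\ref{prop:history}) is needed. I would state this convention explicitly at the start of the proof. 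One further point is worth noting: with this reading, nondeterministically guessing a fresh \(y\) always yields acceptance, so every \(w\) lies in \(L_\exists\) under the existential semantics. This does no harm to the membership claim, but it shows that the separation to come must rest on the deterministic side.
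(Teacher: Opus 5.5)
Your proof is correct and is essentially the paper's own argument: take the witness $x$ of length $|w|$ as the certificate, check the length, and make one query to $\Aev$. Your use of Proposition~\ref{prop:persistent-oracle} for both directions of the equivalence, and implicitly for the continuation-robustness of $J$'s decisions, makes explicit what the paper's one-line proof leaves tacit.

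One caveat concerns only your closing aside. A guessed $y$ is accepted automatically only when no earlier query has $y$ as a prefix, so it is not true that every $w$ lies in $L_{\exists}$. In the continuation built in Case~2 of the proof in Appendix~\ref{app:evolutionary-separation}, every string of length $n$ is permanently rejected, and there $w\notin L_{\exists}$.
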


\begin{proof}
Use \(x\) as the certificate, verify \(|x|=|w|\), and query \(\Aev\).
\end{proof}

\begin{definition}[Continuation-robust decision]
A decision made at history \(H\) is continuation-robust if it remains correct in every
admissible extension \(H'\succeq H\).
\end{definition}

\begin{theorem}[Evolutionary separation]\label{thm:evsep}
Under continuation-robust semantics,
\[
P_{\mathrm{ev}}^{\Aev}\neq NP_{\mathrm{ev}}^{\Aev}.
\]
\end{theorem}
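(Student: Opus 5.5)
Since \(L_{\exists}\in NP_{\mathrm{ev}}^{\Aev}\) by Proposition~\ref{prop:belongtoNP}, it suffices to show \(L_{\exists}\notin P_{\mathrm{ev}}^{\Aev}\) under continuation-robust semantics. The first step is to see why the nondeterministic side is robust. If a certificate \(x\) with \(|x|=|w|\) is accepted at some history, then by Proposition~\ref{prop:persistent-oracle} that acceptance is permanent. Hence the decision ``\(w\in L_{\exists}\)'' stays correct in every extension.

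For the lower bound I will run an adversary argument. Fix a deterministic polynomial-time machine \(D\) with running time \(p(n)\). Choose \(n\) with \(p(n)<2^{n}\), and run \(D\) on input \(0^n\) against the current state of \(\Aev\). For definiteness, take the initial state with \(Q=\{q_0\}\) and \(\delta=\varnothing\), although the argument should work from any reachable state in which few length-\(n\) strings have been decided. During the run, \(D\) issues at most \(p(n)\) queries, so at most \(p(n)\) strings of length \(n\) are touched. There are then two cases.

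\emph{Case 1: \(D\) rejects.} I aim to build an admissible continuation \(H'\) in which some unqueried length-\(n\) string \(x\) becomes accepted. If the path for \(x\) crashes, Rule~3 accepts \(x\) when it is queried. The only obstacle is a path ending at an already existing non-final state \(q\) with a one-symbol transition to an accepting state. This happens only if \(x\) is a proper prefix of an accepted string, and such \(x\) would already be accepted by Rule~2 unless that condition blocks it. So I pick \(x\) off the finite tree of created paths, which is possible because fewer than \(2^n\) length-\(n\) paths exist in the current tree. In \(H'\), \(0^n\in L_{\exists}\), so the rejection is not continuation-robust.

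\emph{Case 2: \(D\) accepts.} Here I want the rejection to be robust too, which is harder: accepted strings are permanent, so if \(D\) itself queried a length-\(n\) string and it was accepted, \(0^n\) really is in \(L_{\exists}\). So the adversary must exploit the one remaining freedom, the choice of which continuation is realized. The plan is to diagonalize over \(D\) by choosing the input length \(n\) and the prior history so that every length-\(n\) query \(D\) makes is rejected. For this I will prepare the history by first querying, for each relevant length-\(n\) prefix node, an extension of length \(n+1\). By Rule~3 and the argument of Proposition~\ref{prop:history}, this makes length-\(n\) strings rejectable. But seeding all \(2^n\) strings costs exponentially many queries, and the seeding must also depend on \(D\).

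I expect Case~2, making acceptances by \(D\) non-robust, to be the main obstacle. My first attempt is to reinterpret the semantics: a decision counts as correct only when it is fixed independently of the realized continuation. Then a machine that accepts only after seeing an accepted witness is robust, while a machine that rejects is never robust, because by Case~1 a witness can always be created later. Under this reading, no deterministic polynomial-time decider can be correct on \emph{both} answers in all continuations: any rejection of some \(w\) is refuted by a continuation adding a witness. Meanwhile the \(NP\) definition escapes this problem, because its existential quantifier ranges over certificates evaluated at the history of the quantified witness rather than a single run. I would then make precise that the \(NP\) side is judged per certificate, so that Case~1 alone yields the separation.
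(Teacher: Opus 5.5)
Your Case~1 is fine: it is the observation that a length-\(n\) string with no node at depth \(n\) crashes and is accepted by Rule~3. The gap is Case~2, which you leave open, and your fallback does not close it.

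Consider the decider \(D\) that queries \(w\) itself and accepts if and only if \(\Aev\) accepts. Run undisturbed at a length \(n\) that no earlier query has reached, Rule~3 accepts \(w\), so \(D\) accepts. By persistence, \(w\in L_{\exists}\) in every continuation, so this answer is both correct and continuation-robust. Hence any strategy in which \(D\) runs undisturbed from a state with no activity at length \(n\), and the adversary acts only after \(D\) halts, fails against this \(D\). ``Case~1 alone'' rules out only deciders that reject, and this one never rejects at the lengths you choose. Your reinterpretation (rejections are never robust, \(NP\) is judged per certificate) changes the definitions rather than proving the stated theorem.

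The missing idea is that the adversary acts \emph{during} \(D\)'s run, which the shared global state (Axiom~\ref{ax:shared}) permits. The paper's argument runs as follows.
\begin{enumerate}[label=(\roman*)]
\item Choose \(n\) with \(p(n)<2^n\) and larger than every previously queried length.
\item Just before \(D\) submits a query \(y\) with \(|y|\ge n\), let \(x=\operatorname{pref}_n(y)\). Unless \(x\) is already protected, another observer first queries \(x0\). Rule~3 accepts \(x0\), and from then on Rule~2 permanently rejects \(x\).
\item Thus every length-\(n\) string \(D\) can encounter is rejected, and at most \(p(n)<2^n\) prefixes are touched.
\item After \(D\) halts, both continuations are still available. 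If \(D\) rejected, query an untouched \(z\), which Rule~3 accepts. If \(D\) accepted, query \(x0\) for every remaining untouched \(x\in\{0,1\}^n\), so that \(L_{\Aev}\cap\{0,1\}^n=\varnothing\).
\end{enumerate}

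Your seeding idea was the right ingredient, but it must be adaptive and partial. Its exponential cost is not the obstacle: the adversary is not time-bounded, and the paper's Case~2 seeds all remaining strings after the run. The real problem is timing. Seeding all \(2^n\) strings before \(D\) halts makes a rejection correct and destroys Case~1. The dependence of the seeding on \(D\), which you flagged as a difficulty, is handled exactly by protecting prefixes on demand rather than precomputing them.
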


\begin{proof}
[Proof sketch]
By Proposition~\ref{prop:belongtoNP}, \(L_{\exists}\in NP^{\Aev}_{\mathrm{ev}}\).  
 Suppose, toward a contradiction, that a deterministic polynomial-time procedure \(D\) continuation-robustly decides \(L_{\exists}\). Let \(p\) be a polynomial bounding the running time of \(D\). Choose \(n\) sufficiently large so that
 \( p(n)<2^n \) 
 and no string of length at least \(n\) has previously been queried to \(\Aev\) by any embedded observer. 
 
 Fix an arbitrary \[ w\in\{0,1\}^n. \] We run \(D\) on \(w\) in the shared computational world. All observers and computational procedures interact with the same global state of \(\Aev\). Hence, while \(D\) is running, other embedded observers may interleave their interactions with those of \(D\) and submit additional queries to \(\Aev\). These queries become part of the same persistent global history. 
 
 Whenever \(D\) is about to submit an oracle query \(y\) with \(|y|\ge n\), let \(x\) be the prefix of \(y\) of length \(n\). If no previous interaction has already determined the relevant branch through \(x\), another embedded observer first queries \(x0\). By the rules defining \(\Aev\), this makes \(x0\) accepted and causes \(x\) itself to be rejected permanently. The query of \(D\) is then processed in the resulting global state. Since \(D\) terminates within \(p(n)\) steps, it makes at most \(p(n)\) oracle queries. Consequently, the above construction can involve at most \(p(n)\) distinct length-\(n\) prefixes. Because \( p(n)<2^n, \) at least one string \( z\in\{0,1\}^n \) remains untouched: no query occurring during the computation has \(z\) as its length-\(n\) prefix. After \(D\) terminates, there are two cases.
 
 If \(D\) rejects \(w\), choose such an untouched string \(z\). Since no previous query has passed through the branch represented by \(z\), querying \(z\) next causes Rule~3 to create the missing path and make \(z\) accepting. Hence \( z\in L_{\Aev}. \) Since \(|z|=|w|=n\), it follows that \( w\in L_{\exists}. \) Thus there is an admissible continuation of the shared global history in which the negative answer of \(D\) is false, contradicting continuation robustness. 
 
 If \(D\) accepts \(w\), then every length-\(n\) string that arose through the above interleaving has already been made permanently rejected. For each remaining untouched string \( x\in\{0,1\}^n, \) another embedded observer may query \(x0\) before \(x\). By the same rule, this permanently forces \(x\) to be rejected. After these finitely many   interactions, every string of length \(n\) is rejected, so \[ L_{\Aev}\cap\{0,1\}^n=\varnothing. \] Therefore no witness of length \(n\) exists and hence \( w\notin L_{\exists}. \) 
 Thus there is an admissible continuation in which the positive answer of \(D\) is false, again contradicting continuation robustness.
 
 The essential point is that a polynomial-time procedure can interact with only polynomially many parts of an exponentially large space of possible length-\(n\) witnesses. When \(D\) terminates, it therefore cannot have fixed enough of the future evolution of the shared global state to make either answer invariant under all admissible continuations. Hence no deterministic polynomial-time procedure continuation-robustly decides \(L_{\exists}\). 
 
 Since \( L_{\exists}\in NP^{\Aev}_{\mathrm{ev}}, \) we conclude that \[ P^{\Aev}_{\mathrm{ev}} \neq NP^{\Aev}_{\mathrm{ev}}. \]
The complete argument is given in Appendix~\ref{app:evolutionary-separation}.
\end{proof}

The proof rests on two related ideas. First, the global state is shared:
\(D\) does not compute in isolation, and a proposition established by an
embedded observer must remain correct regardless of how other embedded
observers may simultaneously interleave their interactions with the same global
state. Second,   a polynomial-time procedure can
interact with only polynomially many parts of an exponentially large space
of possible length-\(n\) witnesses. When \(D\) terminates, it therefore
cannot have fixed enough of the future evolution of the shared global state
to make either answer invariant under all admissible continuations. In this
sense, \(D\) cannot close the relevant future. Hence no deterministic
polynomial-time procedure continuation-robustly decides \(L_{\exists}\).

\begin{remark}
This theorem concerns the explicitly defined evolutionary semantics.  It is not a standard
relativized separation for a fixed oracle set.  The distinction is essential in view of the
relativization barrier \cite{BakerGillSolovay1975}.
\end{remark}

\section{Proof as an internal event}

A formal derivation available to an embedded observer must be written, stored, read, checked,
or communicated.  These activities are events in the world-history.

\begin{definition}[Internal proof realization]
An internal realization of a finite derivation \(\pi\) consists of the observer's accessible
sequence of symbol states, rule applications, memory states, and verification states through
which \(\pi\) is produced and accepted as a proof.
\end{definition}

\begin{axiom}[Internal proof preservation]\label{ax:proof}
Suppose an observer has internally historically equivalent histories in two worlds, and the
same finite derivation \(\pi\), formal axioms, inference rules, intended semantics, and
verification process are reproduced in both.  If \(\pi\) is sound under that intended
semantics, its conclusion cannot be false in one of the two worlds.
\end{axiom}

%This axiom does not say that historically indistinguishable worlds satisfy every same sentence. It concerns a particular proof process whose complete internally relevant realization is preserved.

\subsection{Observer-compatible evolutionary realizations}

\begin{definition}[Observer-compatible realization]
Let \(O\) be an embedded observer with realized history \(H\). A world
\(\World'\) is an \emph{observer-compatible realization} of \(H\) if
\(O\)'s complete internally accessible history in \(\World'\) is identical
to \(H\). In particular, \(O\) has the same observations, memories,
interactions, records, and internally realized reasoning processes in both
worlds.
\end{definition}

\begin{proposition}[Finite-history realization]
\label{prop:finite-realization}
Let \(H\) be any finite consistent history accessible to an embedded
observer \(O\). Then there exist both a predetermined realization and a
persistent non-predetermined realization that are observer-compatible with
\(H\).
\end{proposition}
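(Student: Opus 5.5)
The plan is to construct the two realizations explicitly from the finite data in \(H\), treating the observer's internal history as a fixed finite record that both worlds must reproduce verbatim. Let \(\chi_H\) be the partial characteristic function (or, more generally, the partial function \(f_H\)) determined by the oracle and membership interactions recorded in \(H\). Consistency of \(H\) guarantees that \(\chi_H\) is well defined, so the sets \(A_H^+\), \(A_H^-\) and \(U_H\) are available.

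\emph{Predetermined realization.} Invoke Proposition~\ref{prop:completion} to obtain a classical set \(B_H=A_H^+\cup C\) with \(C\subseteq U_H\), taking for definiteness \(C=\varnothing\). Define \(\World_{\mathrm{pre}}\) to be the world whose oracle component is the fixed set \(B_H\), answering every query by table lookup in this completed extension. All other components of the world are scheduled to replay \(H\) event by event: the observer's own state transitions, the other observers' interleaved actions, records, and proof states. Since \(B_H\) agrees with every answer occurring in \(H\), each oracle response along the replay coincides with the one recorded in \(H\). By induction on the length of \(H\), the observer's accessible history in \(\World_{\mathrm{pre}}\) is therefore exactly \(H\). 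The completed extension is history-independent, so this realization is predetermined.

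\emph{Non-predetermined realization.} Define \(\World_{\mathrm{ev}}\) whose oracle component is a persistent process \(\chi'\) that starts with \(\chi_H\) already realized. On any query outside \(\operatorname{dom}(\chi_H)\), it behaves like the canonical evolving process: either like \(\Aev\) relative to an automaton seeded to realize \(\chi_H\), or like the function \(g\) of the canonical example with values reduced to \(\bits\), for instance first-arrival parity. Along the replay of \(H\), every query lies in \(\operatorname{dom}(\chi_H)\) and thus receives its recorded answer, so by the same induction the observer's internal history is again \(H\). Persistence (Axiom~\ref{ax:persist}) holds because new values are only ever added. For non-predetermination, exhibit two admissible continuations of \(H\) querying two fresh strings \(u,v\in U_H\) in opposite orders, mimicking Proposition~\ref{prop:history} or the \(7,9\) versus \(9,7\) example, so that some fresh input receives different values in the two continuations.

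The main obstacle is the non-predetermination step when the process is \(\Aev\) itself. One must ensure that the history-dependent behaviour is preserved after seeding, that is, that there remain fresh strings off the finitely many realized paths. I would choose \(u=x0\) and \(u'=x\) for a string \(x\) longer than every string in \(H\). By Rule~3 and Rule~2 exactly as in Proposition~\ref{prop:history}, querying \(x\) first accepts it, whereas querying \(x0\) first forces \(x\) to be rejected. Since \(H\) is finite, such an \(x\) exists, which completes the construction. A secondary point is that "observer-compatible" concerns only internally accessible data. Both worlds are therefore built as replays of \(H\), and any difference between them lies solely in the unqueried portion of the oracle, which by definition is not part of \(H\).
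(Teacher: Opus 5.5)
Your proposal is correct and follows the paper's proof: an arbitrary classical completion of the finite partial function gives the predetermined world, and a persistent process that keeps the values fixed by \(H\) and assigns the rest evolutionarily gives the non-predetermined world, with observer-compatibility coming from agreement on every interaction recorded in \(H\). You add two things the paper only asserts: an explicit replay induction, and a concrete non-predetermination witness (querying \(x\) versus \(x0\) first, for \(x\) longer than every string in \(H\)). Note that in the \(\Aev\) variant your lookup override on \(\operatorname{dom}(\chi_H)\) is genuinely needed, because Rules~1--3 alone cannot reproduce some consistent \(\chi_H\) (for instance, one rejecting all of \(x\), \(x0\), \(x1\)); your first-arrival-parity variant, by contrast, works for arbitrary \(H\).
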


\begin{proof}
Since \(H\) is finite and consistent, the values determined along \(H\)
define a finite partial function. A predetermined realization is obtained
by extending this partial function arbitrarily to a total function on the
entire domain.

For the non-predetermined realization, preserve every value already fixed
by \(H\), while leaving values not determined by \(H\) to be assigned by a
persistent evolutionary process. The process agrees with \(H\) on every
interaction already observed by \(O\), but previously undetermined values
may be assigned differently in different admissible continuations.
Therefore the two realizations generate the same internally accessible
history for \(O\), although one is predetermined and the other is not.
\end{proof}

%Condition (iv) is substantive.  It cannot be replaced merely by the observation that \(P_{\mathrm{ev}}^{\Aev}\neq NP_{\mathrm{ev}}^{\Aev}\) if the purported proof concerns different, classical complexity classes.  A fully formal application must align the sentence proved with the sentence falsified. This alignment requirement identifies the central technical burden of the argument.

%\section{The main observer-relative theorem}

\begin{theorem}[Embedded-observer unprovability]\label{thm:main}
Assume axioms~\ref{ax:shared},\ref{ax:irreversible},\ref{ax:persist},\ref{ax:ind},\ref{ax:proof}.  Let \(O\) be an embedded
mathematical observer.  Suppose \(O\)'s realized history has an observer-compatible
evolutionary realization in the sense above.  Then \(O\) cannot possess a sound internal proof
of \(P=NP\).
\end{theorem}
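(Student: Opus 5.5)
The proof will be by contradiction and will chain the axioms together. Suppose \(O\) possesses a sound internal proof of \(P=NP\). By the definition of internal proof realization, there is a finite derivation \(\pi\) together with its internal realization, meaning a sequence of symbol states, rule applications, memory states and verification states. This realization is part of \(O\)'s realized history \(H\) in the actual world \(\World\). By hypothesis, \(H\) has an observer-compatible evolutionary realization \(\World'\). Concretely, \(\World'\) is a world whose global state includes the persistently evolutionary oracle \(\Aev\) of Section~\ref{sec:oracleA}, and \(O\)'s complete internally accessible history in \(\World'\) is identical to \(H\). Proposition~\ref{prop:finite-realization} shows that such realizations are not vacuous: any finite consistent history admits both a predetermined and a persistent non-predetermined realization. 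I would also invoke Axioms~\ref{ax:shared} and~\ref{ax:irreversible} to rule out a multi-observer loophole. Since all observers act on one global state and no one can compare two continuations of the same total past, nothing in \(H\) can certify that \(\World\) rather than \(\World'\) is actual. Axiom~\ref{ax:ind} then makes this precise: \(O\) cannot determine from \(H\) which world it inhabits.

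The next step applies Axiom~\ref{ax:proof}. In both \(\World\) and \(\World'\) the observer's histories are internally historically equivalent. Observer-compatibility explicitly includes the internally realized reasoning processes, so the same derivation \(\pi\), axioms, inference rules, intended semantics and verification process are reproduced in both worlds. If \(\pi\) is sound, its conclusion \(P=NP\) therefore cannot be false in \(\World'\).

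Finally, I would show that \(P=NP\) is false in \(\World'\). In the evolutionary world, polynomial-time computation by embedded procedures is computation against the shared, persistent, history-sensitive global state. So the intended reading of ``\(P\)'' and ``\(NP\)'' for an embedded observer in \(\World'\) is \(P^{\Aev}_{\mathrm{ev}}\) and \(NP^{\Aev}_{\mathrm{ev}}\) under continuation-robust semantics. Theorem~\ref{thm:evsep} gives \(P^{\Aev}_{\mathrm{ev}}\neq NP^{\Aev}_{\mathrm{ev}}\), witnessed by \(L_{\exists}\) (Proposition~\ref{prop:belongtoNP}). Hence \(P=NP\) is false in \(\World'\), which contradicts the conclusion of the previous step. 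So no sound internal proof of \(P=NP\) exists for \(O\).

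The main obstacle is this last alignment step. One must justify that the sentence \(P=NP\) proved by \(\pi\), interpreted under the same intended semantics that Axiom~\ref{ax:proof} holds fixed, is exactly the sentence falsified by Theorem~\ref{thm:evsep} in \(\World'\). I would handle it by building it into the notion of ``evolutionary realization'': in \(\World'\), the observer's intended semantics for polynomial-time computation is computation inside the world, which is the history-sensitive semantics of \(\World'\). If that reading is not accepted, the argument only yields the weaker claim that \(O\) cannot soundly prove \(P^{\Aev}_{\mathrm{ev}}=NP^{\Aev}_{\mathrm{ev}}\). I would state this caveat explicitly.
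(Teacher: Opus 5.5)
Your proposal is correct and follows the same route as the paper: assume a sound internal proof $\pi$ of $P=NP$, reproduce its realization in the observer-compatible evolutionary world $\World'$, apply Axiom~\ref{ax:proof}, and use Theorem~\ref{thm:evsep} to make $P=NP$ false there. You are more explicit than the paper. Its proof cites Theorem~\ref{thm:evsep} for the existence of the realization, which is already a hypothesis, and it leaves implicit both the falsity of $P=NP$ in $\World'$ and the alignment of the proved sentence with the falsified one; the caveat you flag is the same one the paper concedes in its discussion of what does and does not follow.
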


\begin{proof}
Assume that \(O\) possesses a sound internal proof \(\pi\) of \(P=NP\). 

By Theorem~\ref{thm:evsep},
there is an observer-compatible evolutionary realization in which \(O\)'s complete internal
history and the proof realization of \(\pi\) are reproduced, with the same formal system,
intended semantics, and verification process.

By Axiom \ref{ax:proof}, a sound internal proof
process reproduced with the same intended semantics cannot establish a sentence false in one
of the two worlds.  Therefore \(\pi\) is not sound, contrary to assumption.
\end{proof}

\subsection{What follows, and what does not}

The result is conditional.
The theorem has the logical form
\[
\mathsf{Axioms}
\quad\Longrightarrow\quad
\text{no sound internal proof of }P=NP.
\]
The antecedent is not established by ordinary complexity theory.

 Unprovability does not imply falsity.
In general,
\[
\nvdash P=NP
\]
does not imply
\[
P\neq NP.
\]
A true sentence may be unprovable in a particular system.  No truth--provability completeness
principle is assumed here.

 Evolutionary and classical complexity must not be conflated.
The separation in Theorem \ref{thm:evsep} concerns continuation-robust, history-sensitive classes.
A proof about standard Turing-machine \(P\) and \(NP\) is not automatically refuted by a
separation between different classes.  

 External distinguishability is compatible with internal indistinguishability.
An external metalanguage may distinguish predetermined from non-predetermined worlds by
quantifying over alternative histories.  The observer claim is only that inhabitants confined
to one realized history lack that access.

\section{Conclusion}

A mathematical object, similar to Brouwer's choice
sequences, may be well-defined because every realized value is unique and
persistent, without possessing one completed extension.

We then consider observers who are themselves part of the world they observe. All such observers interact with the same evolving world and therefore share a single realized history. Because they cannot return the entire world to exactly the same past state while retaining knowledge of what happened afterward, they cannot observe and compare two different continuations of that same past. Consequently, from the history available to them, the observers may be unable to determine whether the values they observe were fixed in advance or were progressively determined through interaction.

A persistently evolutionary oracle provides a concrete computational model of this idea. Within the history-sensitive semantics introduced in this paper, the model yields a separation between deterministic and nondeterministic polynomial-time computation. We then consider its consequence for the provability of \(P=NP\). Suppose that an observer cannot distinguish its actual world from an evolutionary world in which \(P\neq NP\), and suppose that the observer would carry out exactly the same reasoning and obtain the same purported proof of \(P=NP\) in both worlds. Since \(P=NP\) is false in the evolutionary world, that reasoning cannot constitute a sound proof of \(P=NP\).

The principal conclusion is therefore conditional and epistemic:
\[
  \boxed{\text{embedded observers cannot soundly prove \(P=NP\).}}
\]

\begin{quote}
    If we are embedded observers in a world satisfying the stated assumptions, this limitation applies to us regardless of whether the actual world is predetermined or non-predetermined. The argument does not require the actual world to be non-predetermined; it requires only that, from our position within the world, we cannot distinguish a predetermined world from an observationally compatible non-predetermined one.

\end{quote}

The argument does not prove classical \(P\neq NP\).  Its proposed contribution lies instead
in the foundations of computation and mathematical knowledge: it isolates predetermination as
an assumption distinct from well-definedness and asks what mathematical proof can establish
when the reasoner is itself a process inside one irreversible computational history.

%\section*{Acknowledgments} The author used generative AI tools for English-language editing and stylistic improvement. The mathematical ideas, definitions, arguments, and results presented in this paper are the author's own.

\appendix
\section{A Growing-Code Implementation of \texorpdfstring{\(\mathcal A\)}{A}}
\label{app:growing-code}

The process \(\mathcal A\) was defined in Section~\ref{sec:oracleA} by means of a
persistently growing finite automaton.  The same process can be represented
more concretely by a program whose code grows as new inputs are processed.
We describe here a simple  implementation.

The purpose of this implementation is not to introduce a different oracle.
Rather, it provides another representation of the same process
\(\mathcal A\).  States of the automaton are represented by directories,
transitions are represented by subdirectories, and the accepting status of
a state is represented by a small file stored in the corresponding
directory.

\subsubsection*{The directory structure}

The generated program is organized as a binary tree of directories.  Each
directory corresponds to one finite binary string, interpreted as a prefix
of an input.

The root directory, denoted by
\texttt{A\_code/}, corresponds to the empty string \(\epsilon\).  Its
subdirectory \texttt{0/}, when present, corresponds to the prefix \(0\),
and its subdirectory \texttt{1/} corresponds to the prefix \(1\).
Recursively, if a directory represents a prefix \(u\), then its
subdirectories
\[
\texttt{0/}
\qquad\text{and}\qquad
\texttt{1/}
\]
represent \(u0\) and \(u1\), respectively.

For example, if the path corresponding to the string \(101\) has already
been generated, the relevant part of the program has the form
\begin{verbatim}
A_code/
    node.py
    1/
        node.py
        0/
            node.py
            1/
                node.py
\end{verbatim}

Thus, the successive directories represent
\[
\epsilon,\qquad
1,\qquad
10,\qquad
101.
\]

\subsubsection*{The contents of \texttt{node.py}}

Every directory contains a small file called \texttt{node.py}.  The file
contains exactly three binary values:
\[
(A,Z,O)\in\{0,1\}^3.
\]

Their meanings are as follows:
\begin{enumerate}[label=(\roman*)]
    \item \(A=1\) if and only if the prefix represented by the current
    directory is accepting;

    \item \(Z=1\) if and only if the current directory has a
    \texttt{0/} subdirectory;

    \item \(O=1\) if and only if the current directory has a
    \texttt{1/} subdirectory.
\end{enumerate}

For example, a file may have the form
\begin{verbatim}
ACCEPTING = 0
ZERO_CHILD = 1
ONE_CHILD = 0
\end{verbatim}

This means that the corresponding prefix is currently nonaccepting, that
its \(0\)-extension has already been generated, and that its
\(1\)-extension has not yet been generated.

Hence each \texttt{node.py} has constant size.  The information accumulated
by \(\mathcal A\) is not stored in one growing table that must be searched.
Instead, it is distributed over the growing binary directory structure.

The correspondence with the automaton representation is therefore:
\[
\begin{array}{c|c}
\text{Automaton representation}
&
\text{Growing-code representation}
\\ \hline
\text{state corresponding to }u
&
\text{directory corresponding to }u
\\
u\xrightarrow{0}u0
&
\texttt{0/} subdirectory
\\
u\xrightarrow{1}u1
&
\texttt{1/} subdirectory
\\
u\in F
&
\texttt{ACCEPTING = 1}
\end{array}
\]

\subsubsection*{Processing an input}

Let
\[
x=a_1a_2\cdots a_n,
\qquad a_i\in\{0,1\}.
\]

The program begins in the root directory.  After reading \(a_1\), it
examines the corresponding bit in the root \texttt{node.py}.  If the
required subdirectory exists, the computation moves into that directory.
It then reads \(a_2\) and repeats the same procedure.

Thus, as long as all required branches already exist, the program visits
only the directories corresponding to
\[
\epsilon,\quad
a_1,\quad
a_1a_2,\quad
\ldots,\quad
a_1a_2\cdots a_n.
\]

In particular, the program never searches through directories belonging to
prefixes unrelated to the current input.

The behavior at the end of this traversal corresponds exactly to
Rules~1--3 of Section~5.

\subsubsection*{Implementation of Rule~1}

Suppose that the complete path corresponding to \(x\) already exists.
After reading the last symbol \(a_n\), the program reaches the directory
representing \(x\).

If its \texttt{node.py} contains
\begin{verbatim}
ACCEPTING = 1
\end{verbatim}
then the program returns \texttt{True}.  No directory or file is modified.

This is exactly Rule~1 of the automaton representation.

Because the accepting bit is never changed from \(1\) back to \(0\), the
answer remains permanent.

\subsubsection*{Implementation of Rule~2}

Suppose that the complete path corresponding to \(x\) exists, but the
terminal file contains
\begin{verbatim}
ACCEPTING = 0
\end{verbatim}

The program then needs to determine whether the state corresponding to
\(x\) has a one-symbol transition to an accepting state.

There are at most two possibilities: \(x0\) and \(x1\).

If
\texttt{ZERO\_CHILD = 1}, the program reads the
\texttt{node.py} contained in the \texttt{0/} subdirectory and checks its
accepting bit.  Similarly, if
\texttt{ONE\_CHILD = 1}, it checks the file in the \texttt{1/}
subdirectory.

If either immediate child has
\begin{verbatim}
ACCEPTING = 1
\end{verbatim}
then \(\mathcal A\) returns \texttt{False}.  The current file does not need
to be modified.

Otherwise, neither one-symbol extension is accepting.  In this case,
Rule~2 requires the current state to become accepting.  The program
therefore changes only one bit in the current \texttt{node.py}:
\begin{verbatim}
ACCEPTING = 0
\end{verbatim}
is replaced by
\begin{verbatim}
ACCEPTING = 1.
\end{verbatim}
The program then returns \texttt{True}.

Thus Rule~2 modifies at most one existing node file.

\subsubsection*{Implementation of Rule~3}

Suppose that while reading
\[
x=a_1\cdots a_n
\]
the program has already read
\[
a_1\cdots a_i
\]
but discovers that the subdirectory corresponding to \(a_{i+1}\) does not
exist.

This is the code representation of an undefined transition in the
automaton, and therefore Rule~3 applies.

The program creates the missing directory corresponding to
\(a_1\cdots a_{i+1}\), together with its \texttt{node.py}.  It then updates
the appropriate child bit in the parent file.

For example, if \(a_{i+1}=0\), the parent file is changed from
\begin{verbatim}
ZERO_CHILD = 0
\end{verbatim}
to
\begin{verbatim}
ZERO_CHILD = 1.
\end{verbatim}

The program repeats this construction for every remaining symbol of
\[
a_{i+1}\cdots a_n.
\]

The last generated directory represents the complete input \(x\).  Its
\texttt{node.py} is created with
\begin{verbatim}
ACCEPTING = 1
\end{verbatim}
and the program returns \texttt{True}.

Thus Rule~3 literally enlarges the generated program by adding the missing
branch of the binary directory tree.

\subsubsection*{Example}

Suppose that initially the program consists only of
\begin{verbatim}
A_code/
    node.py
\end{verbatim}
with
\begin{verbatim}
ACCEPTING = 0
ZERO_CHILD = 0
ONE_CHILD = 0
\end{verbatim}

and the first query is
\[
101.
\]

The required branch does not exist.  Rule~3 therefore creates
\begin{verbatim}
A_code/
    node.py
    1/
        node.py
        0/
            node.py
            1/
                node.py
\end{verbatim}

The final file, corresponding to \(101\), has
\begin{verbatim}
ACCEPTING = 1.
\end{verbatim}

The file corresponding to \(10\), however, has
\begin{verbatim}
ACCEPTING = 0
ZERO_CHILD = 0
ONE_CHILD = 1
\end{verbatim}

because it has a \(1\)-child leading to the accepting node \(101\).

Consequently, if \(10\) is queried next, the program reaches the directory
for \(10\), observes that it is nonaccepting, checks its immediate
\(1\)-child, finds that \(101\) is accepting, and returns
\texttt{False} by Rule~2.

By contrast, if \(10\) had been queried first in the initial state,
Rule~3 would have generated the path for \(10\) and created its final file
with
\begin{verbatim}
ACCEPTING = 1.
\end{verbatim}

Hence the growing-code representation reproduces the same history
dependence as the automaton representation.

\subsubsection*{Polynomial running time}

We finally discuss the running time of this implementation.

Let
\[
n=|x|.
\]

When processing \(x\), the program never scans the entire directory tree.
At every level it reads only the \texttt{node.py} corresponding to the
current prefix and then follows at most one of two possible subdirectories.
Therefore at most
\[
n+1
\]
nodes are visited while reading the input.

Each \texttt{node.py} contains only three binary values and therefore has
constant size.

If Rule~1 applies, the program performs only the traversal of the path of
\(x\), requiring \(O(n)\) node operations.

If Rule~2 applies, after this traversal the program examines at most two
additional node files, namely those corresponding to \(x0\) and \(x1\).
It may then modify one binary value in the file corresponding to \(x\).
Thus Rule~2 adds only a constant number of node operations.

If Rule~3 applies after \(i\) symbols have been read, at most
\[
n-i\le n
\]
new directories and constant-size \texttt{node.py} files are created.
Hence at most \(O(n)\) additional node operations are required.

Consequently, under the abstract cost model in which reading, creating, or
updating one constant-size node file is a unit-cost operation, the total
running time is
\[
T_{\mathcal A}(x)=O(n).
\]

For a literal filesystem implementation, accessing a directory at depth
\(i\)  requires processing a pathname of length \(O(i)\). Therefore, using the
conservative bound \(O(n)\) for each such access and noting that at most
\(O(n)\) nodes are accessed or created, we obtain
\(
T_{\mathcal A}(x)
\) is for sure polynomial time. 
Thus, under either cost model, the implementation runs in polynomial time
in the length of the current input.

Importantly, this bound is independent of the total number of nodes
generated during previous interactions.  The complete program may grow
without bound over time, but processing an input \(x\) requires access only
to the branch determined by the prefixes of \(x\).  Consequently, code
generated on unrelated branches does not increase the asymptotic running
time of the current query.

This growing-code representation therefore provides a concrete
polynomial-time implementation of the persistently evolutionary process
\(\mathcal A\).

\section{Proof of Theorem~\ref{thm:evsep}}
\label{app:evolutionary-separation}

In this appendix, we give the complete proof of
Theorem~\ref{thm:evsep}. The proof uses the fact that all computations
interact with the same persistent oracle state and that no fixed order is
imposed on admissible interactions with the oracle. Thus, while simulating
a candidate deterministic procedure, we may perform additional oracle
queries before allowing one of its queries to be processed. All such
queries become part of the same global history, and every answer already
returned by \(\mathcal A\) remains permanent.

Recall that
\[
L_{\mathcal A}
=
\{x\in\{0,1\}^* :
\mathcal A \text{ accepts }x\},
\]
and
\[
L_{\exists}
=
\left\{
w\in\{0,1\}^* :
\exists x\in\{0,1\}^*
\bigl(
|x|=|w|
\land
x\in L_{\mathcal A}
\bigr)
\right\}.
\]

We first establish two consequences of the evolutionary rules.

\begin{lemma}[Protecting a string]
\label{lem:protect-string}
Let \(H\) be the current history, and let
\(x\in\{0,1\}^n\). If no previously processed query has \(x\) as a prefix and the path corresponding to \(x0\) has not already been determined, then querying \(x0\) before \(x\) makes \(x0\) accepted and x rejected. 
Moreover, both answers remain unchanged in every continuation of \(H\).
\end{lemma}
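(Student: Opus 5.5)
The plan is to trace the two queries directly through Rules~1--3 of the automaton \(M\) defining \(\Aev\). After that, I would argue that the resulting configuration around the states for \(x\) and \(x0\) is invariant under every later rule application. Throughout I use the identification (explicit in Appendix~\ref{app:growing-code}) of states of \(M\) with the prefixes whose paths have been generated.

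\textbf{Step 1: the state for \(x\) does not yet exist.} A state representing a string \(u\) is created only when Rule~3 generates a path through \(u\). That happens only while processing a query having \(u\) as a prefix. The one exception is the root \(q_0\).
\begin{itemize}[label=--]
  \item If \(x\ne\epsilon\), the hypothesis that no previously processed query has \(x\) as a prefix means neither the state for \(x\) nor the state for \(x0\) is present in \(M\).
  \item If \(x=\epsilon\), the hypothesis says no query at all has been processed. So \(M\) is still the initial automaton, and \(q_0\notin F\).
\end{itemize}
In both cases the state for \(x\), if it exists, is non-accepting.

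\textbf{Step 2: query \(x0\).} Reading \(x0\) crashes at some position \(i\le|x|\), since the state for \(x0\) is absent. Rule~3 then adds states and transitions for the unread suffix. Only the final new state, which represents \(x0\), is made accepting; the intermediate new states are added as non-accepting. I would make this reading of Rule~3 explicit, since the rule only says ``make the final new state accepting''. Hence \(\Aev\) accepts \(x0\). After this query the state \(q_x\) for \(x\) exists, lies outside \(F\), and has the \(0\)-transition to the accepting state \(q_{x0}\).

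\textbf{Step 3: query \(x\).} The whole of \(x\) is now read, ending in \(q_x\notin F\). Since \(q_x\) has a one-symbol transition to an accepting state, Rule~2 rejects \(x\), and nothing is changed.

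\textbf{Step 4: permanence, the main point.} I would verify by induction over any continuation \(H'\succeq H\) that three invariants are preserved by each of Rules~1--3:
\begin{enumerate}[label=(\roman*)]
  \item \(\delta(q_x,0)=q_{x0}\);
  \item \(q_{x0}\in F\);
  \item \(q_x\notin F\).
\end{enumerate}
No rule ever deletes a transition or removes a state from \(F\), which gives (i) and (ii); this is the content of Proposition~\ref{prop:persistent-oracle}. For (iii):
\begin{itemize}[label=--]
  \item Rule~1 changes nothing.
  \item Rule~3 puts only newly created states into \(F\), and \(q_x\) is not new.
  \item Rule~2 adds a state \(q\) to \(F\) only if \(q\) has no one-symbol transition to an accepting state. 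By (i) and (ii), \(q_x\) always has such a transition, so Rule~2 never adds it.
\end{itemize}
Since \(M\) is deterministic and transitions persist, every later query of \(x\) ends in \(q_x\) and is rejected by Rule~2. Every later query of \(x0\) ends in \(q_{x0}\) and is accepted by Rule~1.

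I expect the only delicate points to be the boundary case \(x=\epsilon\) and the implicit convention that Rule~3's intermediate states are non-accepting. Everything else is routine bookkeeping.
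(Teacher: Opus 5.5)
Your proposal is correct and follows the paper's route. Rule~3 makes \(x0\) accepting. Rule~2 then rejects \(x\), because its state has a \(0\)-transition to the accepting state for \(x0\), and persistence fixes both answers.

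Your Step~4 spells out what the paper leaves implicit when it cites Proposition~\ref{prop:persistent-oracle}: invariants (i)--(ii) guarantee that Rule~2 can never later add \(q_x\) to \(F\). You also treat the \(x=\epsilon\) boundary case and state the convention that Rule~3's intermediate states are non-accepting, which the paper uses tacitly.
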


\begin{proof}
The automaton does not yet contain a complete path labelled
\(x0\). Therefore, when \(x0\) is queried, Rule~3 adds the missing
part of the path and makes its final state accepting. Hence
\[
x0\in L_{\mathcal A}.
\]

Now consider a subsequent query of \(x\). The path labelled \(x\)
ends at a nonaccepting state having a transition labelled \(0\) to
the accepting state reached by \(x0\). Rule~2 therefore causes
\(\mathcal A\) to reject \(x\), so
\[
x\notin L_{\mathcal A}.
\]

By the persistence property of \(\mathcal A\), neither answer can
change in any later history.
\end{proof}

\begin{lemma}[Acceptance of an untouched string]
\label{lem:untouched-string}
Let \(H\) be a history, and let \(z\in\{0,1\}^n\). Suppose that no
query made in \(H\) has \(z\) as its length-\(n\) prefix. Then,
if \(z\) is queried next, \(\mathcal A\) accepts \(z\).
\end{lemma}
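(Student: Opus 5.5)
The plan is to prove an invariant about the shape of the automaton \(M\): at every history \(H\), \(M\) is a trie of the strings queried so far. Concretely, I will prove by induction on the length of \(H\) that:
\begin{enumerate}[label=(\roman*)]
\item the states of \(M\) correspond bijectively to the set \(\mathrm{Pref}(H)\) of prefixes of strings queried in \(H\), together with \(\epsilon\) (represented by \(q_0\));
\item \(\delta(q_u,a)=q_{ua}\) is defined exactly when \(ua\in\mathrm{Pref}(H)\);
\item running \(M\) on a string \(v\) reads all of \(v\) if and only if \(v\in\mathrm{Pref}(H)\cup\{\epsilon\}\).
\end{enumerate}
This matches the growing-code picture of Appendix~\ref{app:growing-code}, where directories are exactly the generated prefixes.

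\textbf{Base case and Rules 1 and 2.} At the initial state only \(q_0\) exists and \(\delta=\varnothing\), so the invariant holds trivially. For the inductive step, suppose a query \(y\) is processed. Under Rule~1 or Rule~2, \(y\) has been read completely. By (iii), \(y\) and all its prefixes already lie in \(\mathrm{Pref}(H)\cup\{\epsilon\}\), so \(\mathrm{Pref}\) does not change. These rules add no states or transitions; they only possibly enlarge \(F\). Hence the invariant is preserved.

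\textbf{Rule 3.} Here the run crashes after reading a prefix \(a_1\cdots a_i\) of \(y\). Rule~3 adds fresh states and transitions exactly for \(a_1\cdots a_{i+1},\dots,y\). These are precisely the new elements of \(\mathrm{Pref}(H\cdot y)\), so (i) and (ii) are restored. Property (iii) then follows from (ii) by induction along the string, because \(M\) is deterministic and its transitions only extend prefixes.

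\textbf{Applying the invariant to \(z\).} Assume no query in \(H\) has \(z\) as its length-\(n\) prefix. Then \(z\notin\mathrm{Pref}(H)\): a query of length less than \(n\) cannot have \(z\) as a prefix, and a query of length at least \(n\) having \(z\) as a prefix would have \(z\) as its length-\(n\) prefix. Assume first \(n\ge1\). By (iii), the run on \(z\) crashes before reading all of \(z\), possibly only at the last symbol. Rule~3 therefore applies, and the final newly created state is made accepting, so \(\mathcal A\) accepts \(z\). In the degenerate case \(n=0\), the hypothesis forces \(H\) to be empty, since \(\epsilon\) is the length-\(0\) prefix of every query. Then \(q_0\notin F\) and \(q_0\) has no transitions, so Rule~2 adds \(q_0\) to \(F\) and accepts \(\epsilon\).

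The main obstacle is stating the invariant precisely enough, in particular property (iii). The lemma's hypothesis is phrased in terms of queries, whereas the rules are phrased in terms of crashes, and (iii) is what bridges the two. One must also check that Rule~2 never creates a path. Everything else is routine bookkeeping.
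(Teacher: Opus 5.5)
Your proof is correct and takes the same route as the paper: no path labelled \(z\) exists, so the run on \(z\) crashes, Rule~3 fires, and \(z\) is accepted. Your trie invariant makes rigorous the paper's unargued step that only a query passing through \(z\) could have created such a path, and your separate treatment of \(n=0\) (no crash, and Rule~2 accepts \(\epsilon\) from the empty history) covers a case the paper's crash argument does not literally handle.
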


\begin{proof}
Since no previous query has \(z\) as its length-\(n\) prefix, no
previous interaction has created a complete path labelled \(z\) or
a path passing through \(z\) to one of its extensions. Consequently,
the automaton crashes at some undefined transition while reading
\(z\). Rule~3 then adds a path for the unread suffix and makes its
final state accepting. Thus,
\[
z\in L_{\mathcal A}.
\]
\end{proof}

\begin{theorem}[Evolutionary separation]
\label{thm:evsep-appendix}
Under continuation-robust semantics,
\[
P_{\mathrm{ev}}^{\mathcal A}
\neq
NP_{\mathrm{ev}}^{\mathcal A}.
\]
\end{theorem}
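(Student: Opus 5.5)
The plan is to make the proof sketch of Theorem~\ref{thm:evsep} rigorous, using Lemmas~\ref{lem:protect-string} and~\ref{lem:untouched-string} as the two basic moves of an adversary that interleaves queries into the shared history. This is licensed by Axiom~\ref{ax:shared} and by the absence of any fixed order on admissible interactions. The upper bound is immediate: by Proposition~\ref{prop:belongtoNP}, \(L_{\exists}\in NP^{\mathcal A}_{\mathrm{ev}}\). It therefore suffices to show that no deterministic procedure \(D\) with polynomial time bound \(p\) decides \(L_{\exists}\) continuation-robustly.

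First I fix the setup. Choose \(n\) with \(p(n)<2^n\), and such that no query of length \(\ge n\) occurs in the current history \(H_0\). Fix any \(w\in\{0,1\}^n\) and run \(D\) on \(w\) from \(H_0\). The adversary follows one rule. Whenever \(D\) is about to issue a query \(y\) with \(|y|\ge n\) whose length-\(n\) prefix \(x\) has not yet appeared in any query, the adversary first queries \(x0\) and then \(x\). Only after that is \(D\)'s query processed.

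I then check three invariants.
\begin{enumerate}[label=(\roman*)]
\item Queries of length \(<n\) never create or modify nodes at depth \(\ge n\). Rule~3 builds only the path of the queried string, and Rule~2 changes only the accepting bit of that string's own node.
\item At the moment \(x\) first becomes relevant, the hypotheses of Lemma~\ref{lem:protect-string} hold. Hence \(x\) is permanently rejected by Proposition~\ref{prop:persistent-oracle}, and later queries through \(x1\) or below \(x0\) cannot undo this.
\item Every length-\(n\) prefix touched during the run was introduced by one of \(D\)'s at most \(p(n)\) queries. So at most \(p(n)<2^n\) strings of length \(n\) are touched, and some \(z\in\{0,1\}^n\) is untouched.
\end{enumerate}

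The argument finishes with the two cases of the sketch.
\begin{enumerate}[label=(\alph*)]
\item If \(D\) rejects \(w\), the continuation "query \(z\) next" is admissible. By Lemma~\ref{lem:untouched-string}, \(z\in L_{\mathcal A}\), so \(w\in L_{\exists}\), and the rejection is not continuation-robust.
\item If \(D\) accepts \(w\), the adversary applies Lemma~\ref{lem:protect-string} to each remaining untouched \(x\in\{0,1\}^n\) in turn, querying \(x0\) and then \(x\). Each such application touches only the subtree under \(x\), so the hypotheses stay valid for the other untouched strings. After these finitely many steps, \(L_{\mathcal A}\cap\{0,1\}^n=\varnothing\). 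By persistence this holds in every further extension, so \(w\notin L_{\exists}\) in that continuation, and the acceptance is not continuation-robust.
\end{enumerate}
Both cases contradict the assumption on \(D\), so \(L_{\exists}\notin P^{\mathcal A}_{\mathrm{ev}}\).

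The main obstacle is invariant (ii): verifying that the adversary's protection is never pre-empted. Two situations must be ruled out.
\begin{enumerate}[label=(\alph*)]
\item A query of \(D\) could pass through \(x\) and thereby determine the branch at \(x\) before the adversary acts.
\item The branch \(x0\) could already exist when the adversary arrives.
\end{enumerate}
I would handle both by having the adversary intervene at the \emph{first} query whose length-\(n\) prefix is \(x\), before that query is processed. Combined with the choice of \(n\) and invariant (i), this guarantees that no node at depth \(\ge n\) below \(x\) exists at that moment. A secondary point is that "\(D\) accepts \(w\)" must be read as \(D\)'s output being fixed at termination, while \(L_{\exists}\) membership is judged in the extended history. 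This is exactly what the continuation-robust semantics stipulates.
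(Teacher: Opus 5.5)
Your proposal is correct and follows the paper's proof essentially step for step. It uses the same choice of \(n\) beyond all previously queried lengths with \(p(n)<2^n\), the same adversary that protects each length-\(n\) prefix by querying \(x0\) just before \(D\)'s first query through it, the same counting bound yielding an unprotected \(z\), and the same two-case continuation argument via Lemmas~\ref{lem:protect-string} and~\ref{lem:untouched-string}. Your additional query of \(x\) right after \(x0\), which makes the rejection an actually returned and hence persistent answer, and your explicit invariant that queries of length \(<n\) never create nodes at depth \(\ge n\), are harmless refinements that state explicitly what the paper leaves implicit.
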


\begin{proof}
By Proposition~\ref{prop:belongtoNP},
\[
L_{\exists}\in NP_{\mathrm{ev}}^{\mathcal A}.
\]
Suppose, toward a contradiction, that
\[
L_{\exists}\in P_{\mathrm{ev}}^{\mathcal A}.
\]
Then there exists a deterministic oracle procedure \(D\) and a
polynomial \(p\) such that, for every input \(w\),

\begin{enumerate}[label=(\roman*)]
    \item \(D\) terminates within \(p(|w|)\) steps;
    \item \(D\)'s output is correct;
    \item \(D\)'s output is continuation-robust.
\end{enumerate}

Notethat all observers and computational procedures interact with the same global oracle state. Consequently, the computation of \(D\) need not occur in isolation: while \(D\) is running, another embedded observer may submit additional queries to \(\mathcal A\). These interactions modify the same persistent state subsequently encountered by \(D\). Hence the sequence consisting of \(D\)'s oracle queries together with the observer's additional queries forms one admissible global history.

\vspace{0.1cm}

Let \(H_0\) be the current global history, and let
\[
m
=
\max
\left\{
|u| :
u \text{ has previously been queried in }H_0
\right\},
\]
where \(m=0\) if no query has yet been made.

Choose \(n>m\) sufficiently large that
\[
p(n)<2^n.
\]
Fix an arbitrary string
\[
w\in\{0,1\}^n.
\]

We run \(D\) on \(w\) as one process within the   shared oracle \(\mathcal A\) and interleave its oracle interactions with queries submitted by another observer.

\begin{quote}
\textit{Whenever \(D\) is about to submit an oracle query \(y\) with
\(|y|\ge n,\)
let
\(x=\operatorname{pref}_n(y)\)
be the prefix of \(y\) of length \(n\). If \(x\) has not already been
protected, we, acting as another observer in the shared computational world,
first query \(x0\). The query \(y\) submitted by \(D\) is then processed
against the resulting global oracle state. We call such a string \(x\)
\emph{protected}.}
\end{quote}
By Lemma~\ref{lem:protect-string}, every protected string
\(x\in\{0,1\}^n\) is rejected by \(\mathcal A\), and this rejection is
permanent.

Since \(D\) executes for at most \(p(n)\) steps, it makes at most
\(p(n)\) oracle queries. Therefore, at most \(p(n)\) distinct strings
of length \(n\) become protected during the computation. Since
\(
p(n)<2^n,
\)
there exists at least one string
\[
z\in\{0,1\}^n
\]
that is not protected. Equivalently, no oracle query made during the
simulation of \(D\) has \(z\) as its length-\(n\) prefix.

After \(D\) terminates, there are two cases.

\medskip

\noindent
\textbf{Case 1: \(D\) rejects \(w\).}

Choose an unprotected string
\[
z\in\{0,1\}^n.
\]
Because no query made during the computation of \(D\) has \(z\) as
its length-\(n\) prefix, Lemma~\ref{lem:untouched-string} applies.
Querying \(z\) causes \(\mathcal A\) to accept it. Hence
\[
z\in L_{\mathcal A}.
\]

Since \(|z|=|w|=n\), it follows that
\[
w\in L_{\exists}.
\]

Thus, there is an admissible continuation of the history in which
the negative output of \(D\) is false. This contradicts the
continuation-robustness of \(D\)'s decision.

\medskip

\noindent
\textbf{Case 2: \(D\) accepts \(w\).}

Every protected string of length \(n\) has already been forced to be
rejected.

For each remaining unprotected string
\[
x\in\{0,1\}^n,
\]
query \(x0\). Since no previous oracle query has \(x\) as its
length-\(n\) prefix, Lemma~\ref{lem:protect-string} applies. Therefore
each such \(x\) is rejected.

After these finitely many additional interactions, every string of
length \(n\) is rejected:
\[
L_{\mathcal A}\cap\{0,1\}^n
=
\varnothing.
\]

Consequently, there exists no \(x\) satisfying
\[
|x|=|w|
\qquad\text{and}\qquad
x\in L_{\mathcal A}.
\]
Thus,
\[
w\notin L_{\exists}.
\]

Therefore, there is an admissible continuation of the history in which the positive output of \(D\) is false. This again contradicts continuation robustness.
Since \(D\) is assumed to decide \(L_{\exists}\) correctly in the shared computational environment, its correctness must be preserved under every admissible interleaving of interactions with the global oracle state. To refute the existence of such a decider, it is therefore sufficient to construct one admissible interleaving in which the output of \(D\) is not continuation-robust.

\medskip

In both cases, the output of \(D\) can be falsified by an admissible
continuation of the shared oracle history. Hence no deterministic
polynomial-time procedure continuation-robustly decides
\(L_{\exists}\). Therefore,
\[
L_{\exists}\notin P_{\mathrm{ev}}^{\mathcal A}.
\]

Since
\[
L_{\exists}\in NP_{\mathrm{ev}}^{\mathcal A},
\]
we conclude that
\[
P_{\mathrm{ev}}^{\mathcal A}
\neq
NP_{\mathrm{ev}}^{\mathcal A}.
\]
\end{proof}

\end{document}